\documentclass[a4paper,10pt,intlimits,ngerman]{amsart}
\usepackage[varg]{txfonts}
\usepackage[unicode]{hyperref}
\usepackage{amsfonts}
\usepackage[mathscr]{euscript}
\usepackage{mathrsfs}
\usepackage{graphicx}
\usepackage{color}
\usepackage{tikz-cd}
\usetikzlibrary{calc,patterns,angles,quotes}




\makeatletter
\makeatother

\newtheorem{theorem}{Theorem}[section]
\newtheorem{proposition}[theorem]{Proposition}
\newtheorem{lemma}[theorem]{Lemma}
\newtheorem{corollary}[theorem]{Corollary}

\newtheorem{remark}[theorem]{Remark}


\hypersetup{
breaklinks=true,
colorlinks=true,
linkcolor=blue,
citecolor=blue,
urlcolor=blue,
}


\numberwithin{equation}{section}


%



\title[]{On the analyticity of solutions to non-linear elliptic partial differential equations}
\author{
Simon Blatt}
\address[Simon Blatt]{Departement of Mathematics, Paris Lodron Universit\"at Salzburg, Hellbrunner Strasse 34, 5020 Salzburg, Austria}
\email[Simon Blatt]{simon.blatt@sbg.ac.at}
\keywords{non-linear elliptic equation, real analytic solutions, Faá di Bruno’s formula, method of majorants, Hilbert's 19th problem}
\subjclass[2010]{35A20, 35B65}

\date{\today}

\begin{document}

\begin{abstract}
We present an elementary and easy proof of the fact that $C^\infty$ solutions to non-linear elliptic equations of second order
$$
 \phi(x, u, D u, D^2 u)=0
$$
are analytic. Following ideas of Kato \cite{Kato1996}, the proof uses an inductive estimate for weighted derivatives. We conclude the proof using
Cauchy's method of majorants \cite{Cauchy1842}.
\end{abstract}

\maketitle

\tableofcontents

\section{Introduction}

%
%

We consider a solution $u \in C^\infty(\Omega, \mathbb R)$ to the fully non-linear equation
\begin{equation} \label{eq:PDE}
 \phi (x,u(x), D u(x), D^2 u(x)) = 0 \text { on  } \Omega.
\end{equation}
Here,  $\Omega \subset \mathbb R^n$ is an open subset of the Euclidean space of dimension $n \geq 2$. Furthermore, we assume that
$
 \phi: \Omega \times \mathbb R \times \mathbb R^n \times S(n) \rightarrow \mathbb R
$
is a real analytic function that is elliptic in the sense that 
the functions $a_{ij} (x,y,p,q):= \frac { \partial f}{ \partial q_{ii}} (x,y,p,q)$ satisfy
\begin{equation} \label{eq:ellipticity}
 0 < a_{ij} (x,y,p,q) \xi_i \xi_j  \quad \text{for all }\xi \in \mathbb R^n. 
\end{equation}

%
%

We will give a short and elementary proof of the following well-known result:

\begin{theorem} \label{thm:Analyticity}
Let $u\in C^\infty (\Omega, \mathbb R)$ be a solution to \eqref{eq:PDE}, $x_0 \in \Omega$, and $\phi$ be analytic in a neighborhood of $(x_0,u(x_0), Du(x_0), D^2(x_0)$. Then $u$ is real analytic near $x_0$.
\end{theorem}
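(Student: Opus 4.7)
The plan is to follow the Kato-style scheme announced in the abstract: differentiate \eqref{eq:PDE} to obtain a linear elliptic equation for each $D^\alpha u$, feed these equations into interior Schauder estimates on a shrinking family of concentric balls, and close an induction on $|\alpha|$ with weighted norms whose growth rate encodes analyticity. I would first fix a small ball $\overline{B_{2\rho}(x_0)}\subset\Omega$ on which $\phi$ and all its derivatives are analytic along the image of $x\mapsto(x,u(x),Du(x),D^2u(x))$, and on which the coefficients $a_{ij}(x):=\partial_{q_{ij}}\phi(x,u,Du,D^2u)$ define a uniformly elliptic, $C^\infty$-coefficient linear operator $Lv:=a_{ij}\partial_i\partial_jv+\partial_{p_i}\phi\cdot\partial_iv+\partial_u\phi\cdot v$ (all arguments of $\phi$ evaluated along the solution). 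Applying $D^\alpha$ to \eqref{eq:PDE} and invoking Fa\`a di Bruno's formula yields an identity
\[
L(D^\alpha u)=F_\alpha,
\]
in which $F_\alpha$ is a combinatorial sum of terms, each consisting of a partial derivative of $\phi$ (evaluated along the solution) multiplied by a product of derivatives $D^\gamma u$ with $|\gamma|\leq|\alpha|+1$, the order of the $\phi$-derivative and the orders and number of $u$-factors being linked by the usual Fa\`a di Bruno bookkeeping.

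The quantitative induction is carried out in terms of weighted quantities of the form
\[
M_k:=\sup_{|\alpha|=k}\ \sup_{0<s<\rho}\ \frac{(\rho-s)^{k}}{k!}\,\|D^\alpha u\|_{C^0(\overline{B_s(x_0)})}
\]
(with a H\"older variant if needed so that interior Schauder estimates apply cleanly). The goal is to prove $M_k\leq AB^k$ with constants $A,B$ independent of $k$; this is the Cauchy characterization of analyticity of $u$ at $x_0$, since it already implies the pointwise bound $|D^\alpha u(x)|\leq A\widetilde B^{|\alpha|}|\alpha|!$ uniformly on $B_{\rho/2}(x_0)$ with $\widetilde B=2B/\rho$. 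The inductive step applies the interior Schauder estimate for $L$ between two concentric balls $B_s\subset B_{s+\delta}\subset B_\rho(x_0)$ to the equation $L(D^{\alpha'}u)=F_{\alpha'}$ with $|\alpha'|=k-2$, bounding $\|D^\alpha u\|_{C^0(B_s)}$ by $\delta^{-2}\|F_{\alpha'}\|_{C^0(B_{s+\delta})}$ plus a lower-order norm of $D^{\alpha'}u$. Analyticity of $\phi$ then supplies $|\partial^\gamma\phi|\leq C_0^{|\gamma|+1}|\gamma|!$, and the inductive hypothesis $M_j\leq AB^j$ for $j<k$ is inserted into the Fa\`a di Bruno expansion of $F_{\alpha'}$.

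The heart of the argument, and what I expect to be the main obstacle, is the combinatorial bookkeeping needed to make the induction close. Choosing $\delta$ proportional to $(\rho-s)/k$ converts the Schauder loss $\delta^{-2}$ into a factor of order $k^2$ that is absorbed by the weight $(\rho-s)^k/k!$; what remains is to verify that the Fa\`a di Bruno sum bounding $\|F_{\alpha'}\|$, once the analytic estimates on $\phi$ and the inductive bounds on the $M_{|\gamma_j|}$ have been substituted, stays below a constant multiple of $AB^k\cdot k!/(\rho-s)^k$. This is the standard but delicate majorant calculation of Cauchy: it forces $B$ to be chosen sufficiently large relative to $C_0$, the ellipticity constant, and the Schauder constant on $B_\rho(x_0)$, but crucially independent of $k$, and it is where the method of majorants enters explicitly.

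Once $M_k\leq AB^k$ holds for all $k$, the uniform bounds $|D^\alpha u(x)|\leq A\widetilde B^{|\alpha|}|\alpha|!$ on $B_{\rho/2}(x_0)$ yield, via a direct Taylor remainder estimate, that $u$ agrees with its convergent Taylor series on a small ball around $x_0$, which is exactly the real analyticity asserted in Theorem~\ref{thm:Analyticity}.
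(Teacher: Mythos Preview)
Your outline is a valid route to Theorem~\ref{thm:Analyticity}, but it is essentially the classical Friedman--Morrey argument rather than the Kato-style proof the paper actually gives. The paper differs from your sketch in three substantive ways. First, instead of H\"older norms and interior Schauder estimates for the variable-coefficient operator $L$, the paper freezes the principal coefficients at $x_0$, changes coordinates so that the frozen operator is $\Delta$, and then uses only the elementary $L^2$ identity $\|D^2 w\|_{L^2}=\|\Delta w\|_{L^2}$ together with the Banach-algebra property of $H^m$, $m=\lfloor n/2\rfloor+1$. Second, in place of your shrinking balls and the weights $(\rho-s)^k/k!$ with $\delta\sim(\rho-s)/k$, the paper uses Kato's device of a \emph{fixed} cutoff $\rho$ raised to the power $N$: the weighted quantities are $\sup_{|\alpha|=N}\|\rho^{N-3}\partial^\alpha u\|_{\tilde H^m}$, and the commutators $[\rho^N,\partial^\beta]$ produce the lower-order terms that in your scheme come from the Schauder loss $\delta^{-2}$. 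Third, rather than closing the induction by directly choosing $B$ large in a combinatorial estimate, the paper writes down an explicit analytic ODE whose Taylor coefficients at $0$ satisfy the recursive inequality with equality, and invokes Cauchy--Kovalevskaya for that ODE; the comparison $M_N\le \tilde M_N$ then follows by a one-line induction. Your approach trades the algebraic convenience of the Banach-algebra and ODE-majorant machinery for more familiar Schauder technology; the paper's approach buys a shorter and more self-contained argument with no H\"older interpolation or shrinking-radius bookkeeping.
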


%
%

Shortly after Hilbert asked whether solution to regular variational problems are always analytic in his famous speech at the ICM in 1900 \cite[Problem 19]{Hilbert1902}, Bernstein  gave the first proof 
of the theorem above for the case $n=2$ \cite{Bernstein1904}  under the assumption that $u \in C^3$. Gevrey 
\cite{Gevrey1918} extend Bernstein's result to parabolic equations inventing a completely different method. Petrowsky \cite{Petrowsky1939} generalised the result for a function in two independent variables to systems on a Euclidean space of arbitrary finite dimension. The most general results for elliptic systems can be found in \cite{Friedman1958} and \cite{Morrey1958,Morrey1958a}

In this article we want to give a simple proof of analyticity of a solution $u$ as 
above. Our proof uses cut-off functions in a way inspired by the work of Kato 
\cite{Kato1996} to derive an recursive estimate for the weighted differentials of 
$u$. Unfortunately, Kato only considered the case $$ \Delta u = u^2$$ in his paper 
and announced to extend his proof to arbitrary non-linear equations.
In \cite{Hashimoto2006} Hashimoto tried to carry out this program. In this paper we clarify and complete the proof of Hashimoto.

Our proof combines basic $L^2$ estimates for the Laplacian with a higher order chain rule to derive in straight forward way a recursive estimate for weighted derivates. We use Cauchy method of majorants \cite{Cauchy1842} to show that these recursive estimates imply the analyticity of $u$. That means that we compare the quantities to the derivatives of a solution to a carefully chosen analytic ordinary differential equation. Using that such solutions are known to be analytic, we can close the argument.

We will collect some well know results in Section \ref{sec:prel} to make the article accessible to a broader audience. We characterise  analytic functions (Subsection \ref{sec:anal}), repeat basic $L^2$ estimates for solutions to  Poisson's equation (Subsection \ref{sec:l2}) and state a higher order chain rule (Subsection \ref{sec:bruno}). Furthermore, we prove some identities that help us to deal with the estimates we obtain applying these two ingredients to the problem. In Section \ref{sec:proof} we end this note with a complete proof of Theorem \ref{thm:Analyticity}.

\section{Preliminaries} \label{sec:prel}

%
%

\subsection{Characterisation of analytic functions} \label{sec:anal}

Our proof relies on the following well known fact:

\begin{lemma} \label{lem:CharacterizationAnalyticity}
A function $u: \Omega \rightarrow \mathbb R^m$, $\Omega \subset \mathbb R^n$ is 
analytic if and only if for every compact set $K \subset \Omega$
there exist constants $C=C_K, A=A_K < \infty$ such that for every 
multi-index $\alpha \in \mathbb R^n$ we have
$$
 \|\partial^\alpha u\|_{L^\infty(K)} \leq C A^{|\alpha|} |\alpha|!.
$$
\end{lemma}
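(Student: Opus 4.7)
The plan is to prove the two implications separately, using Cauchy estimates in the forward direction and the Taylor remainder theorem in the backward direction.

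For the forward implication, assume $u$ is real analytic on $\Omega$. The key reduction is to obtain a radius of convergence that is uniform over any given compact set $K \Subset \Omega$. I would argue as follows: at each $x_0 \in K$ there is some $r(x_0) > 0$ such that $u$ extends holomorphically to the complex polydisk $\{z \in \mathbb{C}^n : |z_i - x_{0,i}| < r(x_0)\}$. Choosing $r$ lower semi-continuous (e.g.\ as the distance from $x_0$ to the set of singularities of the holomorphic extension), the compactness of $K$ yields a uniform $r_0 > 0$ and a uniform bound $M$ for $|u|$ on the union of these polydisks. Cauchy's integral formula then gives
$$
|\partial^\alpha u(x_0)| \leq \frac{\alpha!\, M}{r_0^{|\alpha|}} \quad \text{for all } x_0 \in K.
$$
Since $\alpha! \leq |\alpha|!$, this yields the desired estimate with $C = M$ and $A = 1/r_0$.

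For the backward implication, fix $x_0$ in the interior of some compact $K \Subset \Omega$ containing a closed ball around $x_0$, and let $C,A$ be the constants from the hypothesis. I would apply Taylor's theorem with integral (or Lagrange) remainder of order $N$ to obtain
$$
u(x) = \sum_{|\alpha| \leq N} \frac{\partial^\alpha u(x_0)}{\alpha!}(x-x_0)^\alpha + R_N(x),
$$
with $R_N(x)$ expressible as a sum over $|\alpha| = N+1$ of derivatives of $u$ evaluated on the segment $[x_0,x] \subset K$. Using the hypothesis and the multinomial identity $\sum_{|\alpha|=k} \frac{k!}{\alpha!} = n^k$, I would estimate
$$
|R_N(x)| \leq C \sum_{|\alpha| = N+1} \frac{A^{N+1}(N+1)!}{\alpha!}\, \rho^{N+1} \leq C\,(nA\rho)^{N+1},
$$
where $\rho = \max_i |x_i - x_{0,i}|$. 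Hence for $\rho < 1/(nA)$ the remainder vanishes as $N \to \infty$, and the same geometric bound shows that the Taylor series converges absolutely on this neighborhood of $x_0$. Thus $u$ is representable by its Taylor series near every point of $\Omega$.

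The most delicate part of the argument is the uniformity of the radius of analyticity in the forward direction; the rest is essentially book-keeping with the multinomial inequality $|\alpha|!/\alpha! \leq n^{|\alpha|}$ and the triangle inequality. I would handle this by invoking the holomorphic extension picture, but a purely real-variable alternative is to cover $K$ by finitely many open balls on each of which a Taylor series representation is valid and to extract uniform constants from this finite cover.
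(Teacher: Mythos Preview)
Your proof plan is correct and follows the standard argument for this classical characterization. Note, however, that the paper does not actually give a proof of this lemma at all: it simply states the result as a well-known fact and refers to Krantz's book \cite{Krantz1992} for a proof. So there is nothing to compare against in terms of method; your approach via Cauchy estimates for the forward direction and Taylor remainder for the converse is exactly the kind of argument one finds in the cited reference.
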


A proof of this statement can be found in \cite{Krantz1992}. Combining this with Sobolev's embedding theorems we immediately get

\begin{corollary} \label{cor:CharacterizationAnalyticity}
 A function $u: \Omega \rightarrow \mathbb R^m$, $\Omega \subset \mathbb R^n$ is 
analytic if and only if for every $x \in \Omega$ there is a radius $r>0$
and constants $C, A < \infty$ such that for every multi-index $\alpha \in 
\mathbb R^n$ we have
$$
 \|\partial^\alpha u\|_{L^2(B_r(x))} \leq C A^{|\alpha|} |\alpha|! 
$$
\end{corollary}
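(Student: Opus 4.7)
The plan is to prove the corollary as a direct consequence of Lemma~\ref{lem:CharacterizationAnalyticity} together with Sobolev embedding, handling the two implications separately.

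For the forward direction, suppose $u$ is analytic. Given $x \in \Omega$, choose $r > 0$ with $\overline{B_r(x)} \subset \Omega$. Since $K := \overline{B_r(x)}$ is compact, Lemma~\ref{lem:CharacterizationAnalyticity} yields constants $C_K, A_K$ with $\|\partial^\alpha u\|_{L^\infty(K)} \leq C_K A_K^{|\alpha|} |\alpha|!$, and multiplying by $|B_r(x)|^{1/2}$ gives the desired $L^2$ bound on $B_r(x)$.

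For the reverse direction, assume that for every $x \in \Omega$ there exist $r(x), C(x), A(x)$ with $\|\partial^\alpha u\|_{L^2(B_{r(x)}(x))} \leq C(x) A(x)^{|\alpha|} |\alpha|!$. Fix a compact set $K \subset \Omega$ and cover it by finitely many balls $B_{r(x_i)/2}(x_i)$, $i=1,\dots,N$. Fix $k > n/2$; Sobolev embedding on each ball gives
$$
 \|\partial^\alpha u\|_{L^\infty(B_{r(x_i)/2}(x_i))} \leq C_i \sum_{|\beta|\leq k} \|\partial^{\alpha+\beta}u\|_{L^2(B_{r(x_i)}(x_i))} \leq C_i C(x_i) \sum_{|\beta|\leq k} A(x_i)^{|\alpha|+|\beta|} (|\alpha|+|\beta|)!.
$$
Using $(|\alpha|+|\beta|)! \leq (|\alpha|+k)^k \, |\alpha|!$ for $|\beta|\leq k$ and absorbing the polynomial factor $(|\alpha|+k)^k$ into an exponential $2^{|\alpha|}$ (up to a constant depending only on $k$), this becomes $\|\partial^\alpha u\|_{L^\infty(B_{r(x_i)/2}(x_i))} \leq \widetilde C_i \widetilde A_i^{|\alpha|} |\alpha|!$ with $\widetilde A_i = 2 A(x_i)$. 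Taking the maximum over the finitely many $i$ produces uniform constants on $K$, and Lemma~\ref{lem:CharacterizationAnalyticity} concludes analyticity.

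The only delicate point is keeping track of how the factorial $(|\alpha|+k)!$ compares with $|\alpha|!$ in the Sobolev step; this is the sole obstacle, and it is handled by the elementary estimate above with a mild enlargement of the constant $A$. Everything else is bookkeeping and finite covers.
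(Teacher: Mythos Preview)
Your proof is correct and follows exactly the approach indicated in the paper, which simply states that the corollary follows by combining Lemma~\ref{lem:CharacterizationAnalyticity} with Sobolev's embedding theorem. You have filled in the details the paper leaves implicit: the trivial $L^\infty \to L^2$ direction, the Sobolev step $H^k \hookrightarrow L^\infty$ for $k>n/2$ on small balls, the elementary estimate $(|\alpha|+k)! \le (|\alpha|+k)^k\,|\alpha|!$ absorbed into a slightly larger base $A$, and the finite covering to pass from local to compact sets.
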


It is our aim in the proof of Theorem \ref{thm:Analyticity} in Section \ref{sec:proof}, to derive such estimates for a solution of \eqref{eq:PDE} around the point $x_0 \in \Omega$ and its derivatives.

%
%

\subsection{\protect{$L^2$-estimates}} \label{sec:l2}

We recapitulate the following well-known $L^2$-estimate for the Laplacian. 

\begin{proposition} \label{prop:L2Estimates}
For $u \in C^2(\mathbb R^n)$ with compact support we have
$$
 \|D^2u\|_{L^2} = \|\Delta u\|_{L^2}.
$$
\end{proposition}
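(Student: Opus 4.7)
The plan is the classical two-step integration-by-parts computation. I would start by expanding the square of the $L^2$-norm of the Laplacian as
\begin{equation*}
 \|\Delta u\|_{L^2}^2 = \int_{\mathbb R^n} \Bigl(\sum_{i=1}^n \partial_i^2 u\Bigr)\Bigl(\sum_{j=1}^n \partial_j^2 u\Bigr)\, dx = \sum_{i,j=1}^n \int_{\mathbb R^n} \partial_i^2 u \cdot \partial_j^2 u \, dx,
\end{equation*}
and then manipulate each of the $n^2$ integrals separately.

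For fixed $i,j$, I would integrate by parts once in the variable $x_i$, moving a derivative from $\partial_i^2 u$ over to $\partial_j^2 u$, and then a second time in the variable $x_j$, moving a derivative from $\partial_j^2 u$ onto what is now $\partial_i \partial_j^2 u$. The two sign changes cancel, and the result is $\int_{\mathbb R^n}(\partial_i\partial_j u)^2\,dx$. Summing over $i,j$ then gives $\|D^2 u\|_{L^2}^2$, yielding the claimed identity.

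The only technical point is justifying that the boundary terms vanish in both integrations. Since $u \in C^2(\mathbb R^n)$ has compact support, all derivatives up to order two have compact support as well, so one can apply Fubini and the one-dimensional fundamental theorem of calculus on a sufficiently large cube containing the support, and the boundary contributions are identically zero. No regularization is needed because $u$ is already $C^2$, which is exactly the regularity used by the integration by parts. This step is essentially routine rather than a real obstacle.
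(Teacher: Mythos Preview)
Your proof is correct and is essentially the same argument as the paper's: the paper phrases the two integrations by parts as two applications of Green's first identity, writing $\int \Delta u\,\Delta u = -\int \partial_i u\,\Delta\partial_i u = \int \partial_{ij}u\,\partial_{ij}u$, while you expand the double sum and do the same moves term by term. One small quibble: your intermediate expression $\partial_i\partial_j^2 u$ involves third derivatives, so the assertion that ``$C^2$ is exactly the regularity used'' is not quite right; the paper's proof has the same issue, and in both cases a standard mollification (or simply the observation that in this paper $u$ is $C^\infty$ anyway) closes the gap.
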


\begin{proof}
 Applying Green's first identity twice we get 
$$
 \int_{\mathbb R^n} \Delta u \Delta u dx = - \int_{\mathbb R^n} \Delta \partial_i u \partial_i u dx
 = \int_{\mathbb R^n } \partial_{ji} u \partial_{ji}u dx.
$$
\end{proof}
For $m= \lfloor \frac n 2 \rfloor +1$ we set
$$
 \|f\|_{H^m } = \sum_{|\alpha|=m} \|\partial^\alpha u\|_{L^2}.
$$
Then the following proposition holds:
\begin{proposition} \label{prop:HmEstimates}
For $u \in C^{2+m}(\mathbb R^n)$ with compact support we have
$$
 \|D^2u\|_{H^m} = \|\Delta u\|_{H^m}.
$$
\end{proposition}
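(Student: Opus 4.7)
The plan is to reduce to Proposition \ref{prop:L2Estimates} by applying it to the derivatives $\partial^\alpha u$. Since $u \in C^{2+m}(\mathbb R^n)$ has compact support, for every multi-index $\alpha$ with $|\alpha|=m$ the function $v_\alpha := \partial^\alpha u$ lies in $C^2(\mathbb R^n)$ and has compact support, so Proposition \ref{prop:L2Estimates} applies to $v_\alpha$ and yields
$$
\|D^2 \partial^\alpha u\|_{L^2} = \|\Delta \partial^\alpha u\|_{L^2}.
$$

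Next I would use the fact that partial derivatives commute, so that $D^2 \partial^\alpha u = \partial^\alpha D^2 u$ and $\Delta \partial^\alpha u = \partial^\alpha \Delta u$. Unpacking the definition of $\|\cdot\|_{H^m}$ componentwise (i.e.\ $\|D^2 u\|_{H^m} = \sum_{|\alpha|=m} \|\partial^\alpha D^2 u\|_{L^2}$ and similarly for $\Delta u$) and summing the previous identity over all multi-indices $\alpha$ with $|\alpha|=m$ gives the claim immediately.

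The only mildly delicate step is to keep track of which object one is summing over: in $\|D^2 u\|_{H^m}$ the symbol $D^2 u$ refers to the full Hessian, so the inner $L^2$ norm must be interpreted as $\|D^2 v\|_{L^2}^2 = \sum_{i,j} \|\partial_{ij} v\|_{L^2}^2$, which is precisely the quantity appearing in the proof of Proposition \ref{prop:L2Estimates}. Once this convention is fixed, there is no real obstacle; the statement is essentially Proposition \ref{prop:L2Estimates} applied termwise and the proof should be at most three lines.
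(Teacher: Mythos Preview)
Your argument is correct and matches the paper's proof exactly: apply Proposition~\ref{prop:L2Estimates} to each $\partial^\alpha u$ with $|\alpha|=m$, use that partial derivatives commute, and sum. The paper's version is simply the three-line argument you anticipate, without the additional commentary on conventions.
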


\begin{proof}
From Proposition \ref{prop:L2Estimates} we get 
$$
   \| \partial^\alpha D^2 u \|_{L^2 } =  \| D^2 \partial^\alpha u \|_{L^2 }  = \| \Delta \partial^\alpha u \|_{L^2}
   =\| D^\partial \Delta u \|_{L^2}.
$$
Summing over all $\alpha \in \mathbb N^n$ with $|\alpha| = m$ yields the result.
\end{proof}

\subsection{A Banach algebra}

We will use later on that $H^m(\mathbb R^n)$ is a Banach algebra, if we chose$m=\lfloor \frac n2 \rfloor +1$:

\begin{proposition} [\protect{\cite{Adams1975}}]
For $f,g \in C^{\infty}(\mathbb R^n)$ with compact support and $m = \lfloor \frac n2 \rfloor +1$ we have
$$
 \|fg\|_{H^m} \leq C \| f\|_{H^m} \|g\|_{H^m}.
$$
\end{proposition}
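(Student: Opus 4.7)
The proposition is the classical statement that $H^m(\mathbb{R}^n)$ is a Banach algebra whenever $m>n/2$, which is exactly the regime $m=\lfloor n/2\rfloor +1$. My plan is to combine the Leibniz product rule with Hölder's inequality and the Sobolev embedding $H^m \hookrightarrow L^\infty$.

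First I would apply Leibniz to each top-order derivative:
$$ \partial^\alpha(fg) = \sum_{\beta \leq \alpha} \binom{\alpha}{\beta}\, \partial^\beta f \cdot \partial^{\alpha-\beta}g \qquad (|\alpha|=m), $$
take $L^2$ norms, and use the triangle inequality so that it suffices to estimate each summand $\|\partial^\beta f \cdot \partial^{\alpha-\beta}g\|_{L^2}$ individually.

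Next, for each multi-index $\beta$ with $0\leq|\beta|\leq m$, I would apply Hölder with the pair $p_\beta = 2m/|\beta|$, $q_\beta = 2m/(m-|\beta|)$ (interpreting the endpoint $|\beta|=0$ as $L^\infty \cdot L^2$ and $|\beta|=m$ as $L^2\cdot L^\infty$), which satisfies $1/p_\beta + 1/q_\beta = 1/2$. Gagliardo--Nirenberg interpolation then gives
$$ \|\partial^\beta f\|_{L^{p_\beta}} \leq C\, \|f\|_{L^\infty}^{1-|\beta|/m} \|f\|_{H^m}^{|\beta|/m}, \qquad \|\partial^{\alpha-\beta}g\|_{L^{q_\beta}} \leq C\, \|g\|_{L^\infty}^{|\beta|/m} \|g\|_{H^m}^{1-|\beta|/m}. $$
Multiplying these bounds and invoking the Sobolev embedding $\|h\|_{L^\infty} \leq C\|h\|_{H^m}$, which is valid precisely because $m>n/2$, controls each term by $\|f\|_{H^m}\|g\|_{H^m}$. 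Summing over the finitely many $\beta\leq\alpha$ and over $|\alpha|=m$ yields the claim.

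The main obstacle is arranging the exponents in Hölder so that the Gagliardo--Nirenberg step really applies for every intermediate order $|\beta|$; the threshold $m > n/2$ is what makes both the interpolation and the $L^\infty$ embedding tight but valid. A cleaner alternative I would also consider is the Fourier-analytic route, writing $\|u\|_{H^m}^2 \sim \int (1+|\xi|^{2m})|\hat u(\xi)|^2 d\xi$, using $\widehat{fg}=\hat f * \hat g$, and applying Peetre's inequality $(1+|\xi|^{2m}) \leq C_m\bigl((1+|\xi-\eta|^{2m})+(1+|\eta|^{2m})\bigr)$ together with $L^1$-integrability of $(1+|\xi|^{2m})^{-1}$ (again requiring $m>n/2$); this reduces the product estimate to a Young-type convolution inequality. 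Either route gives the desired bound with essentially the same hypothesis.
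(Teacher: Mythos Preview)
The paper does not supply its own proof of this proposition; it is stated with a bare citation to Adams and used as a black box. So there is nothing to compare against on the paper's side.

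Your argument is the standard proof and is essentially correct: Leibniz, H\"older with the complementary exponents $2m/|\beta|$ and $2m/(m-|\beta|)$, Gagliardo--Nirenberg interpolation between $L^\infty$ and $\dot H^m$, and finally the Sobolev embedding $H^m\hookrightarrow L^\infty$ for $m>n/2$. One small caveat worth flagging: the paper defines $\|f\|_{H^m}$ as the \emph{homogeneous} seminorm $\sum_{|\alpha|=m}\|\partial^\alpha f\|_{L^2}$, whereas the Sobolev embedding $\|f\|_{L^\infty}\le C\|f\|_{H^m}$ you invoke requires the full inhomogeneous norm (or at least some low-order control). In the paper's actual application all functions are supported in the fixed ball $B_1(0)$, so Poincar\'e's inequality makes the two norms equivalent and the issue disappears; but if you write this up as a free-standing statement about arbitrary compactly supported $f,g$, you should either use the inhomogeneous norm from the start or add a remark restricting the supports to a fixed bounded set.
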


\begin{remark} \label{rem:BanachAlgebra}
In order to get rid of the constant $C$ in the last inequality, we work with the norms
\begin{equation} \label{eq:BanachAlgebra}
\|f\|_{\tilde H^m} := C \|f\|_{ H^m}
\end{equation}
in the following where. Then 
$$
 \|fg\|_{\tilde H^m} =  C \|fg\|_{H^m } \leq C^2 \|f\|_{H^m} \|g\|_{H^m} \leq \|f\|_{\tilde H^m} \|g\|_{\tilde H^m}.
$$

\end{remark}

\subsection{Binomials}

For two multiindices $\alpha=(\alpha_1, \ldots, \alpha_n), \beta= (\beta_1, \ldots, \beta_n) \in \mathbb N_0^n$ the binomial coefficient is defined through
$$
 \binom{\alpha}{\beta} = \prod_{j=1}^n \binom{\alpha_j}{\beta_j}.
$$
We will use the following result to deal with terms coming from the generalised 
product rule. 

\begin{proposition}[\protect{\cite[Proposition 2.1]{Kato1996}}] \label{prop:Binomials}
 Let $\alpha$ be a multiindex and $k \leq |\alpha|$. Then 
 $$
 \sum_{\beta \leq \alpha, |\beta|=k} \binom{\alpha}{\beta} = \binom{|\alpha|}{k}.
 $$
 \end{proposition}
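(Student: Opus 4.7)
The plan is to prove this multi-index identity by a generating-function argument, comparing two expansions of the same polynomial. Writing $|\alpha| = \alpha_1 + \cdots + \alpha_n$, I start from the trivial factorization
$$
 (1+t)^{|\alpha|} \;=\; \prod_{i=1}^{n} (1+t)^{\alpha_i}.
$$
The left-hand side, by the ordinary one-variable binomial theorem, equals $\sum_{k=0}^{|\alpha|} \binom{|\alpha|}{k} t^k$, so the coefficient of $t^k$ is $\binom{|\alpha|}{k}$.

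Next I expand the right-hand side factorwise. The one-variable binomial theorem gives $(1+t)^{\alpha_i} = \sum_{\beta_i=0}^{\alpha_i} \binom{\alpha_i}{\beta_i} t^{\beta_i}$, and multiplying these $n$ expansions yields
$$
 \prod_{i=1}^{n}(1+t)^{\alpha_i} \;=\; \sum_{\beta \leq \alpha} \Bigl(\prod_{i=1}^{n} \binom{\alpha_i}{\beta_i}\Bigr) t^{\beta_1 + \cdots + \beta_n} \;=\; \sum_{\beta \leq \alpha} \binom{\alpha}{\beta} t^{|\beta|},
$$
using the conventions $\binom{\alpha}{\beta} = \prod_i \binom{\alpha_i}{\beta_i}$ and $\beta \leq \alpha$ meaning $\beta_i \leq \alpha_i$ for all $i$. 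Collecting terms with $|\beta| = k$, the coefficient of $t^k$ on this side is exactly $\sum_{\beta \leq \alpha,\,|\beta|=k} \binom{\alpha}{\beta}$.

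Equating the two expressions for the coefficient of $t^k$ yields the claimed identity. There is essentially no obstacle in this approach — the whole proof reduces to the scalar binomial theorem together with the observation that exponents add under multiplication of polynomials. (A purely combinatorial variant is equally painless: the right-hand side counts the $k$-element subsets of a set of size $|\alpha|$ partitioned into blocks of sizes $\alpha_1,\ldots,\alpha_n$, stratified by how many elements $\beta_i$ are chosen from block $i$.)
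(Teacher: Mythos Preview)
Your proof is correct and follows exactly the same approach as the paper: expand both sides of the identity $(1+t)^{\alpha_1}\cdots(1+t)^{\alpha_n}=(1+t)^{|\alpha|}$ and compare the coefficient of $t^k$. The paper merely sketches this in one line, whereas you have written out the details in full.
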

 
 \begin{proof}
  This can be seen comparing the coefficients in the Taylor series expansions of the 
identity
$$
 (1+t)^{\alpha_1} \cdots (1+t)^{\alpha_n} = (1+t)^{|\alpha|}.
$$
 \end{proof}

\subsection{Higher order chain rule} \label{sec:bruno}

The following fact for higher derivatives will be important in our proof:

\begin{proposition} \label{prop:FraaDiBruno}
 Let $g:\mathbb R^{m_1} \rightarrow \mathbb R^{m_2}$ and $f: \mathbb R^{m_2} \rightarrow \mathbb R$ be two $C^k$-functions. Then for any multiindex 
 $\alpha \in \mathbb N ^{m_1}$ of length $|\alpha| \leq k$ and $x \in \Omega$ the identity
  $$
  \partial^\alpha (f \circ g ) (x) = P_{m_1, m_2}^\alpha ( \{\partial^\gamma f(g(x))\}_{|\gamma| \leq |\alpha|}, \{\partial^\gamma g_i\}_{0 \leq \gamma \leq \alpha} )
 $$
holds,  where $P^\alpha_{m_1,m_2}$ is a fixed linear combination with positive coefficients of terms of the form 
 $$
  \partial^l_{x_{i_1}, x_{i_l}}( g(x))  \partial^{\gamma_1}g_{i_1} \cdots  \partial^{\gamma_l}g_{i_l}  
 $$
 with $1 \leq l \leq |\alpha|$ and $|\gamma_1| + \ldots + |\gamma_l| = |\alpha|.$.
 \end{proposition}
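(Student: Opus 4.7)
The plan is to prove this by straightforward induction on $|\alpha|$, using the ordinary chain rule at each step and tracking how the two parts of the product rule preserve the claimed structure.

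For the base case $|\alpha|=1$, writing $\alpha=e_j$, the standard chain rule gives
$$
\partial_{x_j}(f\circ g)(x) = \sum_{i=1}^{m_2} \partial_{y_i} f(g(x))\, \partial_{x_j} g_i(x),
$$
which is of the claimed form with $l=1$, $\gamma_1 = e_j$, and unit (hence positive) coefficients.

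For the inductive step, assume the statement holds for all multi-indices of length $N$, and let $|\alpha|=N+1$. Write $\alpha = \alpha' + e_j$ and apply $\partial_{x_j}$ to the inductive expression for $\partial^{\alpha'}(f\circ g)$. Each summand is a product of a factor $\partial^l_{y_{i_1}\cdots y_{i_l}} f(g(x))$ and factors $\partial^{\gamma_1}g_{i_1}\cdots\partial^{\gamma_l}g_{i_l}$ with $|\gamma_1|+\cdots+|\gamma_l| = N$ and $l \leq N$. By the product rule, differentiating such a summand with respect to $x_j$ produces two types of contributions. First, differentiating the outer factor via the chain rule yields
$$
\sum_{s=1}^{m_2} \partial^{l+1}_{y_{i_1}\cdots y_{i_l} y_s} f(g(x))\, \partial_{x_j}g_s(x) \cdot \partial^{\gamma_1}g_{i_1}\cdots\partial^{\gamma_l}g_{i_l},
$$
i.e.\ a new term with $l'=l+1$ inner factors (one of them being $\partial_{x_j}g_s$ of order $1$) and total order $|\gamma_1|+\cdots+|\gamma_l|+1 = N+1$; note $l' \leq N+1 = |\alpha|$. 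Second, differentiating one of the inner factors $\partial^{\gamma_k}g_{i_k}$ replaces it by $\partial^{\gamma_k+e_j}g_{i_k}$, leaving $l' = l$ and raising the total order to $N+1$. In both cases the resulting term has the prescribed form, and since the product rule and chain rule only add and multiply terms already present (with positive integer coefficients), positivity of the coefficients is preserved.

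The only potential obstacle is bookkeeping: one must check that the bounds $1 \leq l' \leq |\alpha|$ and $\sum |\gamma_k'| = |\alpha|$ are maintained under both types of differentiation, and that no negative coefficients arise. Both checks are immediate from the case analysis above, so the induction closes.
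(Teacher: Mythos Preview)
Your proof is correct and follows exactly the approach the paper indicates: the paper does not spell out a proof at all but simply writes ``We leave the easy inductive proof of this statement to the reader,'' and your argument is precisely that induction on $|\alpha|$ via the product and chain rules.
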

 
For $m_1=m_2=1$ we will use the notation $P^k$ instead of $P^\alpha_{m_1,m_2}.$ 
 We leave the easy inductive proof of this statement to the reader. This proposition contains all the information about the higher order chain rule we need. Precise and explicite formulas for the higher order chain rule were given by Faa di Bruno for the univariate  case \cite{DiBruno1857}  and by for example Constanini and Savits in \cite{Constantine1996} for the multivariate case.
 
 Let us derive two easy consequences of Proposition \ref{prop:FraaDiBruno}. The first one will allow us to reduce the multivariate case to the univariate one.
 
 \begin{lemma} \label{lem:SingleVariable}
 For constants $a_\gamma= a_{|\gamma|}, b_{\gamma} = b_{|\gamma||} \in 
\mathbb R $, depending only on the 
length of the multiindex $\gamma$, we have
$$
 P^\alpha_{m_1, m_2} (\{a_{|\gamma|}\},\{b_{|\gamma|}\}) = 
P^{|\alpha|}(\{a_{|\gamma|}\}, \{b_{|\gamma|}\}).
$$
 \end{lemma}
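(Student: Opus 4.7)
The plan is to verify the identity by constructing explicit test functions $f$ and $g$ whose derivatives at a chosen point realize exactly the prescribed length-dependent constants $a_{|\gamma|}$ and $b_{|\gamma|}$, and whose composition $f \circ g$ collapses to a function of a single variable. Applying Proposition~\ref{prop:FraaDiBruno} once on the multivariate side and once on the univariate side then yields the two quantities that we have to compare.

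More concretely, first pick polynomials $\phi, \psi : \mathbb{R} \to \mathbb{R}$ of degree at most $|\alpha|$ with $\phi^{(k)}(0) = a_k$ and $\psi^{(k)}(0) = b_k$ for $0 \leq k \leq |\alpha|$; their existence is immediate from Taylor's formula. Now set $L(x) := x_1 + \cdots + x_{m_1}$ and $M(y) := y_1 + \cdots + y_{m_2}$ and define
\[
 g_i(x) := \psi(L(x)) \quad (i=1,\ldots,m_2), \qquad f(y) := \phi\bigl(M(y) - M(g(0)) \bigr),
\]
where the translation is inserted precisely so that the arguments of $\phi^{(k)}$ all land at $0$ after evaluation at $g(0)$. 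Iterating the one-variable chain rule along $L$ (respectively $M$) shows that $\partial^\gamma g_i(0) = \psi^{(|\gamma|)}(0) = b_{|\gamma|}$ and $\partial^\gamma f(g(0)) = \phi^{(|\gamma|)}(0) = a_{|\gamma|}$ for every multiindex $\gamma$ with $|\gamma| \leq |\alpha|$.

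By Proposition~\ref{prop:FraaDiBruno} applied to these $f$ and $g$, the multivariate side equals $\partial^\alpha(f \circ g)(0) = P^\alpha_{m_1,m_2}(\{a_{|\gamma|}\}, \{b_{|\gamma|}\})$. On the other hand, by construction $(f \circ g)(x) = H(L(x))$ for a single‑variable function $H$, and so $\partial^\alpha(f \circ g)(0) = H^{(|\alpha|)}(0)$; applying Proposition~\ref{prop:FraaDiBruno} in the trivial case $m_1 = m_2 = 1$ to $H$ identifies $H^{(|\alpha|)}(0)$ with $P^{|\alpha|}(\{a_{|\gamma|}\}, \{b_{|\gamma|}\})$. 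Equating the two expressions for $\partial^\alpha(f \circ g)(0)$ gives the asserted identity.

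The main obstacle is the bookkeeping in the second step: the test functions must be normalised so that no spurious combinatorial factor depending on $m_2$ (coming from the diagonal sum $M$) appears when comparing the derivative values on the two sides, so that the very same constants $a_{|\gamma|}$ and $b_{|\gamma|}$ are plugged into both $P^\alpha_{m_1,m_2}$ and $P^{|\alpha|}$. Once this is arranged, the remainder of the argument is a formal consequence of Proposition~\ref{prop:FraaDiBruno}.
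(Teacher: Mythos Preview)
Your approach is essentially the same as the paper's: both choose test functions of the form $g_i(x)=\tilde g(x_1+\cdots+x_{m_1})$ and $f$ depending only on $y_1+\cdots+y_{m_2}$, so that $f\circ g$ collapses to a single-variable composition, and then equate the multivariate and univariate Fa\`a di Bruno expressions evaluated at the same point. The only difference is cosmetic---the paper normalises via $f(y)=\tilde f\bigl((y_1+\cdots+y_{m_2})/m_2\bigr)$ rather than your translated $\phi(M(y)-M(g(0)))$---but the structure of the argument is identical.
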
 
 
 \begin{proof}
   Plugging functions $g$ and $f$ of the form $$g(x_1, \ldots ,x_{m_1}) = \tilde 
g( x_1+ \dots + x_{m_1} ) \cdot (1, \ldots, 1)^t$$ and $$f(y_1, \ldots 
y_{m_2})=\tilde f\left(\frac{y_1+ \dots + y_{m_2}}{m_2}\right)$$ into the higher 
order chain rule (Proposition \ref{prop:FraaDiBruno}), we get from 
$$
 f\circ g = (\tilde f \circ \tilde g) (x_1 + \dots + x_{m_1})
$$
that
$$
P^\alpha_{m_1, m_2} (\{\partial^\gamma f(g(x))\}_{|\gamma| \leq |\alpha|}, \{\partial^\gamma g_i\}_{0 \leq \gamma \leq \alpha}) = P^{|\alpha|} (\{\partial^l \tilde f( \tilde g(x))\}_{l \leq |\alpha|}, \{\partial^l \tilde g\}_{0 \leq l \leq |\alpha|}).
$$
So for constants $a_\gamma= a_{|\gamma|}, b_{\gamma} = b_{|\gamma|} \in 
\mathbb R $ depending only on the 
length of the multiindex $\gamma$, we obtain
$$
 P^\alpha_{m_1, m_2} (\{a_{|\gamma|}\},\{b_{|\gamma|}\}) = 
P^{|\alpha|}\{\{a_{|\gamma|} \}, \{b_{|\gamma|} \}\}.
$$

 \end{proof}

 We will use Lemma \ref{lem:SingleVariable} to estimate derivatives of analytic function in $x, u(x), Du(x),$ and $D^2u(x)$. Let $u \in C^\infty(B_1(0), \mathbb R)$ and $f: \Omega \times \mathbb R \times \mathbb R^n \times S(n) \rightarrow \mathbb R $ be a function that is infinitely often differentiable on a neighborhood of the image 
 $$K = \{(x,u(x),Du(x), D^2u(x): x \in B_1(0))\}.$$ 
 For a fixed cutoff function $\rho \in 
C^\infty ( \mathbb R^n,  [0,1])$ with support 
contained in $B_1(0)$ and $\rho=1$ on $B_{\frac 12 }(0)$  we will consider the 
quantities
\begin{equation*}
\begin{cases}
 \tilde M_N := \sup_{|\alpha |=N} \| \partial^\alpha u\|_{\tilde H^m(B_1(0))} &\text{ for } N = 
0,1,2\\
 \tilde M_N := \sup_{|\alpha |=N} \| \rho^{N-3} \partial^\alpha u\|_{\tilde H^m} \quad 
&\text{ for } N \geq 3
\end{cases}
\end{equation*}
and
\begin{align*}
 M_N := \tilde M_{N+2} + \tilde M_{N+1} + \tilde M_N +1
\end{align*}
for all $N \geq 0.$ We prove the following basic higher order estimate for the right hand side of our equation using the chain rule.

 \begin{lemma} \label{lem:estimateComposition}
  We have
 \begin{align*}
   \| \rho^{|\alpha| }\partial ^\alpha f(x,u,Du,D^2 u)\|_{\tilde H^m } 
& \leq P^{|\alpha|} ( \{ E^{|\gamma|}\|D^{|\gamma|} f 
\|_{C^m (K)}\}, \{M_k\}_{k=0, \ldots, |\alpha| })
 \end{align*}
 where $E = (1 + \|\rho\|_{ \tilde H^m})^3.$
 \end{lemma}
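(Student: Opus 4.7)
The plan is to invoke Faà di Bruno's formula (Proposition \ref{prop:FraaDiBruno}) with the map $g(x) := (x, u(x), Du(x), D^2u(x))$, expressing $\partial^\alpha f(x,u,Du,D^2u)$ as a nonnegative linear combination of products of the form $(\partial^l f)(g(x))\,\partial^{\gamma_1}g_{i_1}\cdots\partial^{\gamma_l}g_{i_l}$ with $|\gamma_1|+\cdots+|\gamma_l|=|\alpha|$ and each $|\gamma_j|\geq 1$. Multiplying by $\rho^{|\alpha|}$, I distribute the cut-off as $\rho^{|\alpha|}=\prod_j \rho^{|\gamma_j|}$, placing one factor $\rho^{|\gamma_j|}$ with the corresponding derivative of $g$, and then use the normalised Banach-algebra estimate (Remark \ref{rem:BanachAlgebra}) to reduce the $\tilde H^m$-norm of the whole product to the product of $\tilde H^m$-norms of the individual factors.

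For each $g$-factor, $\partial^{\gamma_j}g_{i_j}$ is either a trivial coordinate derivative or equals $\partial^\beta u$ for some $|\beta|\in\{|\gamma_j|,|\gamma_j|+1,|\gamma_j|+2\}$. Recalling that for $|\beta|\geq 3$ one has $\tilde M_{|\beta|}=\|\rho^{|\beta|-3}\partial^\beta u\|_{\tilde H^m}$ (and no $\rho$-weight for $|\beta|\leq 2$), trading powers of $\rho$ via the Banach algebra yields $\|\rho^{|\gamma_j|}\partial^{\gamma_j}g_{i_j}\|_{\tilde H^m}\leq \|\rho\|_{\tilde H^m}^{s}\tilde M_{|\beta|}$ for an exponent $s$ lying between $0$ and $3$. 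The choice $E=(1+\|\rho\|_{\tilde H^m})^3$ dominates all such factors, giving the uniform bound $\|\rho^{|\gamma_j|}\partial^{\gamma_j}g_{i_j}\|_{\tilde H^m}\leq E\,M_{|\gamma_j|}$ by the very definition of $M_k$.

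For the remaining factor $(\partial^l f)(g)$, which does not carry any cut-off of its own, I apply Faà di Bruno a second time to its derivatives of order $\leq m$: each such derivative is a polynomial expression in $(\partial^{l+|\mu|}f)(g)$, bounded on $K$ by $\|D^l f\|_{C^m(K)}$ for $|\mu|\leq m$, multiplied by derivatives of $g$ of order at most $m$, which are absorbed into the low-order quantities $M_0,\ldots,M_m$ and into $E^l$. The net outcome is a bound of the shape $\|(\partial^l f)(g)\|_{\tilde H^m}\leq E^l \|D^l f\|_{C^m(K)}$. Combining the two factor estimates and using Lemma \ref{lem:SingleVariable} to identify the resulting multi-variate Faà di Bruno polynomial with its single-variate counterpart $P^{|\alpha|}$ then yields the stated estimate.

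The main obstacle is the second application of Faà di Bruno in the estimation of $\|(\partial^l f)(g)\|_{\tilde H^m}$: one must check that the various constants produced by that expansion depend polynomially on the fixed low-order quantities $M_0,\ldots,M_m$ and only exponentially (with base $E$) on $l$, so that the result fits the polynomial structure of $P^{|\alpha|}$ on the right-hand side. This is precisely the reason for defining $E$ as a cube rather than a single power of $(1+\|\rho\|_{\tilde H^m})$, leaving enough slack to absorb the various extra $\rho$-powers that appear whenever one shifts weights between derivatives.
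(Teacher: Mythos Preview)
Your approach is essentially the same as the paper's: apply Fa\`a di Bruno to $f\circ g$ with $g=(x,u,Du,D^2u)$, distribute $\rho^{|\alpha|}=\prod_j\rho^{|\gamma_j|}$ among the factors, use the Banach-algebra property to split the $\tilde H^m$-norm into a product, bound each weighted $g$-factor by $E\,M_{|\gamma_j|}$ (the cube in $E$ absorbing the offset between $|\gamma_j|$ and the number of $u$-derivatives coming from the $u$, $Du$, $D^2u$ components), and finally invoke Lemma~\ref{lem:SingleVariable} to pass from $P^\alpha_{m_1,m_2}$ to $P^{|\alpha|}$.

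One bookkeeping point to straighten out: your claim $\|(\partial^l f)(g)\|_{\tilde H^m}\leq E^l\|D^l f\|_{C^m(K)}$ is not quite right. The chain rule applied to the $m$ derivatives of $(\partial^l f)\circ g$ produces factors that are derivatives of $g$ of order $\le m$, hence constants depending on $\|u\|_{C^{m+2}(B_1(0))}$; they do \emph{not} depend on $l$, and they have nothing to do with $\|\rho\|_{\tilde H^m}$, so they cannot be absorbed into a power $E^l$. The entire $E^l$ in the final estimate comes from the $l$ factors $\|\rho^{|\gamma_j|}\partial^{\gamma_j}g_{i_j}\|_{\tilde H^m}\le E\,M_{|\gamma_j|}$ alone. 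The paper simply writes $\|\partial^l f(g)\|_{\tilde H^m}\le C\|D^l f\|_{C^m(K)}$ with an $l$-independent constant $C$ (depending on low-order data of $u$) and carries this $C$ through to the last line; it never claims a second $E^l$. Once you correct this, your argument coincides with the paper's.
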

 
 \begin{proof}
Applying Faa di Brunos formula (Proposition \ref{prop:FraaDiBruno}) to 
$f\circ g$ where
$$
 g(x) = (x,u(x), Du(x), D^2u(x))
$$
we get
 \begin{align*}
   \rho^{|\alpha| }\partial ^\alpha f(x,u,Du,D^2 u)\ & =  \rho^{|\alpha|}P^{\alpha}_{n,n^2 + 2n +1} 
(\{\partial^\gamma f \}, \{\partial^\gamma g \}) 
\end{align*}
where $P^{\alpha}_{n,n^2 + 2n +1} 
(\{\partial^\gamma f \}, \{\partial^\gamma g \}) $ is a linear combination with positive coefficients of terms of the form 
$$
  \partial^l_{x_{i_1}, \ldots,  x_{i_l}}f( g(x)) \, \partial^{\gamma_1}g_{i_1} 
\cdots  \partial^{\gamma_l}g_{i_l}  
 $$
 with $1 \leq l \leq |\alpha|$ and $\gamma_1 + \ldots + \gamma_l = \alpha.$Due to the special structure of $g$, we have $\partial^\gamma g_i = 1$ for 
$i=1, \ldots, n$ and $|\gamma|=1$ and $\partial^\gamma g_i = 0$ for $i=1, \ldots, n$ 
and $|\gamma|\geq 2$.
For such terms we get using that $H^m$ is a Banach algebra \eqref{eq:BanachAlgebra}
 \begin{align*}
  \| \rho^{|\alpha|} \partial^l_{x_{i_1}, x_{i_l}} f( g(x))  
\partial^{\gamma_1}g_{i_1} \cdots  \partial^{\gamma_k}g_{i_l}\|_{\tilde H^m (B_1(0))}
  & \leq  E^l \| \partial^k_{x_{i_1}, x_{i_l}} f( g(x)) \|_{\tilde H^m (B_1(0))}
 M_{|\gamma_1|} \cdots M_{|\gamma_l|} \\
 & \leq  E^l \| \partial^l_{x_{i_1}, x_{i_l}} 
f( g(x)) \|_{\tilde H^m (B_1(0)}
 M_{|\gamma_1|} \cdots M_{|\gamma_l|}
 \end{align*}
 where $E = (\|\rho\|_{\tilde H^m} +1)^3.$
 We hence deduce using Lemma \ref{lem:SingleVariable} that  
  \begin{align*}
   \| \rho^{|\alpha| }\partial ^\alpha f(x,u,Du,D^2 u)\|_{\tilde H^m } 
& \leq C P^{|\alpha|} ( \{ E^{|\gamma|}\|D^{|\gamma|} f 
\|_{C^m (K)}\}, \{M_k\}_{k=0, \ldots, |\alpha| })
 \end{align*}
 \end{proof}

\section{The proof} \label{sec:proof}

After the above preliminaries, we can now prove Theorem \ref{thm:Analyticity}.

\subsection{Reduction of the problem}
Let $x_0 \in \Omega$. We will show that $u$ is analytic in a neighborhood of $x_0$ in $\Omega$. We can assume that $x_0 =0$, $B_1(0) \subset \Omega$, $a_ij (0) = \delta_{ij}$, and that the $a_{ij}$ have an arbitrarily small oscillation  on $B_1(0)$, say  smaller than $\varepsilon$ for an $\varepsilon >0$ to be chosen below. This can be achieved by a standard change of coordinates  combined with a scaling of the solution.
 
Differentiating Equation \eqref{eq:PDE}, we see that the function $v_k= \partial_k u$  satisfies the equation
\begin{align*}
 a_{ij} (x,u, D u, D^2 u) \partial_{ij} v_k (x) &=- ( b_i (x,u, Du, D^2u) \partial_i v_k(x)  + c (x,u, \Delta u, \Delta^2 u) v_k + d_k (x,u,Du, D^2 u) ) \\
 &=  f_k(x,u,D u, D^2u),
\end{align*}
where
\begin{align*}
 &a_{ij} (x,y, p,q) = \frac {\partial \phi }{\partial q_{ij}} (x,y, p,q), &b_i  (x,y, p,q) = \frac {\partial \phi}{\partial p_{i}} (x,y, p,q), \\
 &c(x,y, p,q) = \frac {\partial \phi}{\partial y} (x,y, p,q),  &d_k (x,y, p,q) = \frac  {\partial \phi}{\partial x_{k}}  (x,y,p,q).
\end{align*}
We can read off from its definition that $f_k$ is an analytic function around $0$. 

We now freeze the coefficients, i.e. we set  $a_{ij}^0 := a_{ij} (0,u(0),Du(0), D^2u(0))$ and observe that $v_k$ solves the equation
$$
 a_{ij} ^0 \partial_{ij} v_k = \tilde a_{ij}(x,u,Du,D^2 u) \partial_{ij} v_k + f_k(x,u,Du,D^2u)
$$
where $\tilde a_{ij} = a_{ij}^0 - a_{ij} $.  Due to the bound on the oscillation of the $a_{ij}$, we have
$$
 |\tilde a_{ij}| \leq \varepsilon 
$$ 
on $B_1(0).$

As
$
 a_{ij}^0 = \delta_{ij},
$
the function $v_k$ solves
\begin{equation} \label{eq:FrozenConstants}
 \Delta v_k = \tilde a_{ij} (x,u,Du, D^2 u) \partial_{ij} v_k + f_k (x,u,Du,D^2 u)= R_k.
\end{equation}

\subsection{A recursive inequality}
 
Since  $\tilde a_{ij} $ and $f_k$ are analytic, there are constants $C,A < \infty$ such that
for every multi-index $\alpha \in \mathbb N^{2n \times n^2}$ we have
\begin{equation} \label{eq:ControlFA}
 \|\partial^\alpha a_{ij}(x,u,Du,D^2u)\|_{C^m (B_1(0))}, \|\partial^{\alpha} 
f_k(x,u,Du,D^2u)  \|_{C^m (B_1(0))} \leq C A^{|\alpha|} |\alpha|!.
\end{equation}

For a fixed cutoff function $\rho \in C^\infty ( \mathbb R^n),  [0,1])$ with support 
contained in $B_1(0)$ and $\rho=1$ on $B_{\frac 12 }(0)$  we now consider as above the 
quantities
\begin{align*}
 \tilde M_N := \sup_{|\alpha |=N} \| \partial^\alpha u\|_{\tilde H^m} \text{ for } N = 0,1,2\\
 \tilde M_N := \sup_{|\alpha |=N} \| \rho^{N-3} \partial^\alpha u\|_{\tilde H^m} \quad \text{ for } N \geq 3
\end{align*}
and
\begin{align*}
 M_N := \tilde M_{N+2} + \tilde M_{N+1} + \tilde M_N +1
\end{align*}
for all $N \geq 0.$ We prove the following recursive estimate:

\begin{proposition} \label{prop:RecursiveEstimate}
For all $N \geq 1$
\begin{multline*}
 M_{N+1}\leq C \varepsilon M_{N+1}  +C  \sum_{0 < l \leq N}  
\binom{N}{l}P^{l}(\{E^kA^kk!\}, \{M_k\}) M_{(N + 1) -l} 
 \\
 +C P^{N}(\{E^k A^kk!\}, \{M_{k} \}) + C N M_N  + C N (N-1) M_{N-1}.
 \end{multline*}
\end{proposition}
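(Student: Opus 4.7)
The plan is to estimate the top-order term in $M_{N+1}=\tilde M_{N+3}+\tilde M_{N+2}+\tilde M_{N+1}+1$ by going back to the linearized equation \eqref{eq:FrozenConstants}. Given a multi-index $\alpha$ with $|\alpha|=N+3$, decompose $\alpha=\beta+e_i+e_j+e_k$ with $|\beta|=N$ (possible since $N+3\geq 3$) and write $v_k=\partial_k u$, so that $\partial^\alpha u=\partial^\beta \partial_{ij} v_k$. I would then apply Proposition~\ref{prop:HmEstimates} to the compactly supported function $w:=\rho^N \partial^\beta v_k$, which gives $\|D^2 w\|_{H^m}=\|\Delta w\|_{H^m}$. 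Moving $\rho^N$ back through $D^2$ on the left and through $\Delta$ on the right reduces the estimate of $\|\rho^N \partial^\beta \partial_{ij} v_k\|_{\tilde H^m}$ to controlling $\|\rho^N \partial^\beta R_k\|_{\tilde H^m}$ together with commutator contributions.

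The commutators $[\partial_{ij},\rho^N]$ and $[\Delta,\rho^N]$ applied to $\partial^\beta v_k$ produce terms of the form $N\rho^{N-1}\nabla \rho\cdot \nabla \partial^\beta v_k$ (together with $N\rho^{N-1}\Delta \rho \cdot \partial^\beta v_k$) and $N(N-1)\rho^{N-2}|\nabla \rho|^2\partial^\beta v_k$. The first is a derivative of $u$ of order $N+2$ with weight $\rho^{N-1}$, hence bounded by $\tilde M_{N+2}\leq M_N$, contributing the term $CN M_N$. The second is of order $N+1$ with weight $\rho^{N-2}$, hence bounded by $\tilde M_{N+1}\leq M_{N-1}$, contributing $CN(N-1) M_{N-1}$. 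This accounts for the last two terms in the statement.

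The main term $\rho^N \partial^\beta R_k$ with $R_k=\tilde a_{ij} \partial_{ij} v_k+f_k$ is expanded by Leibniz:
$$
 \rho^N \partial^\beta\bigl(\tilde a_{ij}\partial_{ij} v_k\bigr)=\sum_{\delta\leq \beta}\binom{\beta}{\delta}\bigl(\rho^{|\delta|}\partial^\delta \tilde a_{ij}\bigr)\bigl(\rho^{N-|\delta|}\partial^{\beta-\delta}\partial_{ij}v_k\bigr).
$$
Taking $\tilde H^m$-norms, the Banach algebra property of Remark~\ref{rem:BanachAlgebra} lets me factor the two pieces. Lemma~\ref{lem:estimateComposition} combined with the analyticity bound \eqref{eq:ControlFA} yields $\|\rho^{|\delta|}\partial^\delta \tilde a_{ij}\|_{\tilde H^m}\leq P^{|\delta|}(\{E^k A^k k!\},\{M_k\})$, while the definitions of $\tilde M$ and $M$ give $\|\rho^{N-|\delta|}\partial^{\beta-\delta}\partial_{ij} v_k\|_{\tilde H^m}\leq \tilde M_{(N+3)-|\delta|}\leq M_{(N+1)-|\delta|}$. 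Grouping terms with fixed $|\delta|=l$ and applying Proposition~\ref{prop:Binomials} to collapse $\sum_{|\delta|=l}\binom{\beta}{\delta}$ to $\binom{N}{l}$, I isolate the $l=0$ contribution and bound it by $C\varepsilon M_{N+1}$ using the small-oscillation hypothesis $\|\tilde a_{ij}\|_{C^m(B_1(0))}\leq \varepsilon$; the $l\geq 1$ terms give the remaining sum in the statement. A direct application of Lemma~\ref{lem:estimateComposition} to $\rho^N \partial^\beta f_k$ produces the term $CP^N(\{E^k A^k k!\},\{M_k\})$.

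The main obstacle is bookkeeping: the powers of $\rho$ arising from commutator expansion, from the factorization $\rho^N=\rho^{|\delta|}\cdot \rho^{N-|\delta|}$ in the Leibniz rule, and from the piecewise definition of $\tilde M_k$ (which changes at $k=3$) have to line up perfectly with the weights demanded by the definitions of $\tilde M_k$ and $M_k$, and the low-order cases $N=0,1$ must be checked separately. The small-oscillation bound on $\tilde a_{ij}$ is essential, since the term $C\varepsilon M_{N+1}$ on the right-hand side will later need to be absorbed into the left.
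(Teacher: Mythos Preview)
Your proposal is correct and follows essentially the same route as the paper: apply the $H^m$ estimate to $\rho^N\partial^\beta v_k$, commute the weight through $D^2$ and $\Delta$ to produce the $CNM_N$ and $CN(N-1)M_{N-1}$ terms, expand $\rho^N\partial^\beta R_k$ by Leibniz, use Lemma~\ref{lem:estimateComposition} with \eqref{eq:ControlFA} on the coefficient factors, and collapse the multi-index sum via Proposition~\ref{prop:Binomials}. The only noteworthy difference is your handling of the $l=0$ term: the paper does not assume $\|\tilde a_{ij}\|_{C^m}\leq\varepsilon$ but only $\|\tilde a_{ij}\|_{L^\infty}\leq\varepsilon$, and therefore splits the $\tilde H^m$-norm of $\tilde a_{ij}\cdot\rho^N\partial^\beta\partial_{ij}v_k$ into a top-order piece (picking up $\varepsilon M_{N+1}$) and lower-order pieces (picking up an extra $CM_N$, harmlessly absorbed into $CNM_N$).
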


\begin{proof}
We consider the weighted function $ \rho^{|\alpha|} (x) \partial^{\alpha +\beta}
v_k(x)$ for a multiindices $\alpha$ and $\beta$ with 
$|\beta|=2$ and $N = |\alpha| $.
Proposition \ref{prop:HmEstimates} and Equation \eqref{eq:FrozenConstants} yield
\begin{align*}
\| \rho^{N} \partial^{\alpha+\beta} v_k \|_{\tilde H^m} &\leq  \|\partial^\beta \rho^{N} \partial^{\alpha} v_k\|_{\tilde H^m} + \|[\rho^{N}, \partial^{\beta}] \partial^{\alpha} v_k\|_{\tilde H^m} \\
&\leq C \|\Delta \rho^{N} \partial^{\alpha} v_k\|_{\tilde H^m} + \|[\rho^{N}, \partial^{\beta}] \partial^{\alpha} v_k\|_{\tilde H^m} \\
&\leq C \| \rho^{N} \partial^{\alpha} \Delta v_k\|_{\tilde H^m}  + C \|[\Delta, \rho^{N}] \partial^{\alpha} v_k\|_{\tilde H^m} +\|[\rho^{N}, \partial^{\beta}] \partial^{\alpha} v_k\|_{\tilde H^m} \\
&= C \| \rho^{N} \partial^{\alpha} R_k\|_{\tilde H^m}  + C \|[\Delta, \rho^{N}] \partial^{\alpha} v_k\|_{\tilde H^m} +\|[\rho^{N}, \partial^{\beta}] \partial^{\alpha} v_k\|_{\tilde H^m} \\
&= I_1 + I_2 + I_3.
\end{align*}
Here, $[A,B]=AB - BA$ denotes the commutator. Using the identity
$$
 \partial_{ij} (\rho^{N} h) = \left\{ N (N-1) \rho^{N-2} \partial_i \rho \partial_j \rho +  N \rho^{N-1} \partial_{ij} \rho  \right\} h + N \rho^{N-1} \left(\partial_i \rho \partial_j h + \partial_j \rho \partial_i h \right ) + \rho^{N} \partial_{ij} h
$$
together with the fact that $H^m$ is a Banach algebra, we get
$$
 I_2+I_3 \leq C ( N M_N + N(N-1)M_{N-1}).
$$
To estimate the term $I_1$ we use 
\begin{align*}
 \|\rho^{N} \tilde a_{ij} \partial^\alpha \partial_{ij} \partial^\alpha v_k\|_{ \tilde H^m}
 \leq \|\tilde a_{ij}\|_{L^\infty} \|\rho^{N} \partial^\alpha v_k\|_{\tilde H^m}  + \|a_{ij}\|_{W^{m, \infty}} \|\rho^{N} \partial^\alpha v_k \|_{\tilde H^{m-1}}
 \leq \varepsilon M_{N+1} + C M_N.
\end{align*}
The product rule and the higher order chain rule together with the fact that $H^m$ is a Banach algebra yields
\begin{align*}
 \|\rho^{N} \partial^\alpha (\tilde a_{ij} \partial_{ij} v_k\|_{\tilde H^m} &\leq \|\rho^{N} \tilde a_{ij} \partial^{\alpha} \partial_{ij} v_k\|_{\tilde H^m} +\sum_{0 < \gamma \leq \alpha}  \binom{\alpha}{\gamma} \|\rho^{|\gamma|} \partial^{\gamma} \tilde a_{ij} \|_{\tilde H^m} \| \rho^{|\alpha| - |\gamma|}\partial^{\alpha-\gamma} \partial_{ij} v_k\| _{\tilde H^m}
\\
& \leq C \varepsilon M_{N+1} + C M_N +C  \sum_{0 < \gamma \leq \alpha}  
\binom{\alpha}{\gamma} P^{|\gamma|}(\{ E^k A^k k!\}, \{M_k\}) M_{|\alpha| - |\gamma| +1}
\end{align*}
where  we used Lemma \ref{lem:estimateComposition} and \eqref{eq:ControlFA} in the last step. Using Proposition \ref{prop:Binomials} we 
deduce
\begin{equation}
 \|\rho^{N+1} \partial^\alpha (\tilde a_{ij} \partial_{ij} v_k\|_{\tilde H^m} 
 \leq 
  C \varepsilon M_{N+1} + C M_N +C  \sum_{0 < l \leq N}  
\binom{N}{l} P^{l}(\{ E^k A^k k!\}, \{M_k\}) M_{(N +
1)-l}.
\end{equation}
Estimating the term $\|f_k\|_{\tilde H^m}$ in a similar way yields
$$
 I_1 \leq C \varepsilon M_{N+1}  + C M_N + C \sum_{0 <l  \leq N}  
\binom{N}{l}P^{l}(\{E^k A^k k!\}, \{M_k\}) M_{(N +1) - l} 
 + C P^{N}( \{E^k A^k k ! \}, \{M_{k}\})
$$
and hence finally
\begin{multline*} 
 M_{N+1}\leq C \varepsilon M_{N+1}  +C  \sum_{0 < l \leq N}  
\binom{N}{l}P^{l}(\{E^kA^kk!\},\{ M_k\}) M_{(N + 1) -l} 
 \\
 +C P^{N}(\{E^k A^k k!\}, M_{k}) + C N M_N  + C N (N-1) M_{N-1}.
\end{multline*}
\end{proof}

\subsection{Cauchy's method of majorants}
From Propositon \ref{prop:RecursiveEstimate} we get with $A_1= E \cdot A$
\begin{multline*} 
 M_{N+1}\leq C \varepsilon M_{N+1}  +C  \sum_{0 < l \leq N}  
\binom{N}{l}P^{l}(\{A_1^kk!\}, \{M_k\}) M_{(N + 1) -l} 
 \\
 +C P^{N}(\{A_1^kk!\}, M_{k}) + C N M_N  + C N (N-1) M_{N-1}
\end{multline*}
for all $N \in \mathbb N.$ We assume from now on that $C \geq 1$ and $C \varepsilon \leq \frac 1 2$, so that 
\begin{multline} \label{eq:recursiveInequality}
 M_{N+1}\leq \frac 1 2 M_{N+1}  +C  \sum_{0 < l \leq N}  
\binom{N}{l}P^{l}(\{A_1^kk!\}, \{M_k\}) M_{(N + 1) -l} 
 \\
 +C P^{N}(\{A_1^kk!\}, M_{k}) + C N M_N  + C N (N-1) M_{N-1}.
\end{multline}

We now look for a suitable majorant for the right-hand side. For $A_1 = E\cdot 
A$ we set
$$
  G_1(z) := \frac 1 2 + C \sum_{n \in \mathbb N} A_1^n (z-M_0)^n
$$
and
$$
 G_2(z) := C \sum_{n \in \mathbb N_0} A_1^n (z-M_0)^n .
$$
We consider the solution $w$ to ordinary the differential equation
$$
 w'(t) =  G_1(w) w' +  G_2(w) + C  (t+t^2) )w' + M_1$$
with $w(0) = M_0.$  As $|G_1(w)+ C(t+t^2)| <1 $ near to $t=0$ and $w= M_0$ this differential 
equations is equivalent to 
$$
 w'(t) = \frac {G_2(w)}{1-(G_1(w) + C t+ Ct^2)}
$$
near to $t_0$ and $w=M_0$. Hence, this initial value problem has a unique analytic 
solution on an open time interval containing $0$. If we assume that $C A_1 \geq M_1/2$, we get that
$$
 w'(0) \geq M_1.
$$
Using the product and chain rule and using that $G_1'(M_0) =0$, one sees that the derivatives $\hat M_N = 
w^{(N)}(0)$ satisfy $M_1 \leq \hat M_1$ and the equality
\begin{multline*}
\hat M_{N+1} = \frac 12 \hat M_{N+1}  + C \sum_{0 < l \leq N}  
\binom{N}{l}P^{l}(\{A_1^kk!\}, \{\hat M_k\}) \hat M_{N + 1 -l} 
 \\ + C P^{N}(\{C A_1^kk!\},\{ \hat M_{k}\}) + C N \hat M_N  + C N (N-1) \hat M_{N-1}
\end{multline*}
for $N\geq 1$. Comparing the last equality with  inequality \eqref{eq:recursiveInequality} we inductively deduce that
$$
 M_N \leq \hat M_N
$$
for all $N \in \mathbb N_0$.
Since $w$ is analytic around $0$, Lemma \ref{lem:CharacterizationAnalyticity} gives us constants $0< \tilde C , B $ such that
$\hat M_N = w^{(N)}(0) \leq \tilde C B^N N!$ for all $N \in \mathbb N_0$. Hence,
$$
 M_N \leq \hat M_N \leq \tilde C B^N N!
$$
and Lemma \ref{lem:CharacterizationAnalyticity} tells us  that $u$ is analytic on $B_1(0)$.

\end{document}